\DeclareMathOperator{\Hom}{Hom}
\DeclareMathOperator{\Prim}{Prim}
\DeclareMathOperator{\Ext}{Ext}
\newcommand*{\KK}{\textup{KK}}
\newcommand*{\K}{\textup{K}}
\newcommand*{\E}{\textup{E}}
\newcommand*{\into}{\rightarrowtail}
\newcommand*{\prto}{\twoheadrightarrow}
\newcommand*{\Cont}{\mathrm{C}}    
\newcommand*{\Nattrafo}{\mathcal{NT}}
\newcommand*{\FK}{\textup{FK}}
\newcommand*{\Cstarsep}{\mathfrak{C^*sep}}
\newcommand*{\KKcat}{\mathfrak{KK}}
\newcommand*{\Ecat}{\mathfrak{E}}
\newcommand*{\Ab}{\mathfrak{Ab}}
\newcommand*{\GMod}[1]{\mathfrak{Mod}(#1)^{\mathbb{Z}/2}}
\newcommand*{\Z}{\mathbb{Z}}
\newcommand*{\N}{\mathbb{N}}
\newcommand*{\Loclo}{\mathbb{LC}}
\newcommand*{\Compacts}{\mathbb{K}}
\newcommand*{\Q}{\mathbb{Q}}
\newcommand*{\nb}{\nobreakdash}
\newcommand*{\Bootstrap}{{\mathcal B}}
\newcommand*{\Cuntz}{{\mathcal O}}
\newcommand*{\Cstar}{\texorpdfstring{$C^*$\nb-}{C*}}
\newcommand*{\Star}{\texorpdfstring{$^*$\nb-}{*-}}
\newcommand*{\blank}{\text{\textvisiblespace}}
\newcommand*{\defeq}{\mathrel{\vcentcolon=}}
\theoremstyle{plain}
\numberwithin{equation}{section}
\theoremstyle{plain}
\newtheorem{theorem}[equation]{Theorem}
\newtheorem{lemma}[equation]{Lemma}
\newtheorem{corollary}[equation]{Corollary}
\newtheorem{proposition}[equation]{Proposition}
\theoremstyle{definition}
\newtheorem{definition}[equation]{Definition}
\theoremstyle{remark}
\newtheorem{remark}[equation]{Remark}
\newtheorem{example}[equation]{Example}
\title[One-parameter continuous fields with rational $\K$-theory]{One-parameter continuous fields of Kirchberg algebras with rational $\K$-theory}
\author{Rasmus Bentmann}
\address{Department of Mathematical Sciences\\University of
Copenhagen\\Universitetsparken 5\\2100 Copenhagen \O \\Denmark}
\email{bentmann@math.ku.dk}
\author{Marius Dadarlat}
\address{Department of Mathematics\\
  Purdue University\\
  150 N.~University Street\\
  West Lafayette, IN 47907-2067\\
  USA}
\email{mdd@math.purdue.edu}
\subjclass[2010]{46L35, 46L80, 19K35, 46M20}
\keywords{Kirchberg algebras, Continuous fields, $\K$-theory}
\thanks{R.B. was supported by the Danish National Research Foundation through the Centre for Symmetry and Deformation (DNRF92) and by the Marie Curie Research Training Network EU-NCG. M.D. was partially supported by NSF grant \#DMS--1101305}
\begin{document}

\begin{abstract}
We show that 
 separable continuous fields over the unit interval whose fibers are stable Kirchberg algebras that satisfy the universal coefficient theorem in K$\K$-theory (UCT)
and have rational $\K$-theory groups are classified up to  isomorphism by filtrated $\K$-theory.
\end{abstract}

\maketitle

\section{Introduction}
The purpose of this paper is to investigate the classification problem for continuous fields of Kirchberg algebras over the unit interval by $\K$-theory invariants.
 It is natural to associate to a $C[0,1]$-algebra $A$ the family of all
exact triangles of $\Z/2$-graded $\K$-theory groups
\[
\xymatrix  @C=-1pc{
\K_*\bigl(A(U)\bigl)\ar[rr]& & \K_*\bigl(A(Y)\bigl)\ar[dl]\\
{}& \K_*\bigl(A(Y\setminus U)\bigl)\ar[ul]}
\]
where $Y$ is a subinterval of $[0,1]$ and $U$ is a relatively open subinterval of $Y$.
The family of these exact triangles are assembled into an invariant $\FK(A)$ called the filtrated $\K$-theory of $A$, see Definition~\ref{def:FK}.

In this article we exhibit several  classes of separable continuous fields over the unit interval whose fibers are stable UCT Kirchberg algebras and for which filtrated $\K$-theory is a complete invariant. In particular, we show that this is the case for fields which are stable under tensoring with the universal UHF-algebra.
A \Cstar{}algebra $D$ has rational $\K$-theory if $\K_*(D)\cong\K_*(D) \otimes \Q$.

\begin{theorem}\label{thm:main-result}
Let $A$ and $B$ be separable continuous fields over the unit interval whose fibers are stable Kirchberg algebras that satisfy the UCT
and have rational $\K$-theory groups. Then any isomorphism of filtrated $\K$-theory $\FK(A)\cong \FK(B)$
lifts to a $C[0,1]$-linear \Star{}isomorphism $A\cong B$.
\end{theorem}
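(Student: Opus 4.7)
The natural strategy is the familiar two-step reduction. \textbf{Step A:} promote the given isomorphism $\FK(A)\cong\FK(B)$ to an invertible class $\alpha\in\KK^{[0,1]}(A,B)$. \textbf{Step B:} quote the $C(X)$\nb-equivariant Kirchberg classification theorem; since $[0,1]$ is a one-dimensional metrizable space, this theorem says that two separable stable $C[0,1]$\nb-algebras with Kirchberg fibers are $C[0,1]$\nb-linearly $*$\nb-isomorphic if and only if they are $\KK^{[0,1]}$\nb-equivalent, so once Step A is carried out the desired $*$\nb-isomorphism is produced automatically.

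For Step A I would invoke a universal coefficient theorem for $\KK^{[0,1]}$ with UCT fibers, set up in an earlier section of the paper, taking the form of a natural short exact sequence
\[
0\to\Ext^1_{\Nattrafo}\bigl(\FK(A),\FK(B)[1]\bigr)\to\KK^{[0,1]}(A,B)\to\Hom_{\Nattrafo}\bigl(\FK(A),\FK(B)\bigr)\to 0,
\]
where $\Nattrafo$ denotes the category of natural transformations between the functors $Y\mapsto\K_*(A(Y))$ indexed by subintervals $Y\subseteq[0,1]$. Under the rationality hypothesis, $\FK(A)$ and $\FK(B)$ take values in $\Z/2$\nb-graded $\Q$\nb-vector spaces, and one argues that the $\Ext^1$\nb-term then vanishes. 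Granted this vanishing, the comparison map $\KK^{[0,1]}(A,B)\to\Hom_{\Nattrafo}(\FK(A),\FK(B))$ is bijective. Lifting both the given $\FK$\nb-isomorphism and its inverse to classes in $\KK^{[0,1]}$, the Kasparov product of the two lifts has image equal to the identity in $\Hom_{\Nattrafo}$; injectivity of the comparison map forces this product to be an identity class, so $\alpha$ is a $\KK^{[0,1]}$\nb-equivalence, and Step B concludes the proof.

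The principal obstacle is the vanishing of the $\Ext^1$\nb-term in the rational setting. The category of $\Nattrafo$\nb-modules over $[0,1]$ has in general infinite global dimension, so vanishing cannot be obtained by pure homological-dimension considerations; instead one has to exploit the specific structure of rational $\Nattrafo$\nb-modules arising from continuous fields. Concretely, I would pass to a rationalisation $\Nattrafo_\Q\defeq\Nattrafo\otimes\Q$, observe that a rational $\Nattrafo$\nb-module and its $\Nattrafo_\Q$\nb-extension compute the same $\Ext^1$, and then show that $\FK$ of a continuous field with rational $\K$\nb-theory is free (or at least relatively projective) as an $\Nattrafo_\Q$\nb-module, using the explicit description of subinterval inclusions in $[0,1]$ and the presence of retractions afforded by the total order structure. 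A secondary, and largely foundational, task is the construction of the UCT short exact sequence itself for $C[0,1]$\nb-algebras with UCT fibers, which requires adapting the Bentmann--Meyer--Nest style homological machinery from the finite\nb-$T_0$\nb-space setting to the non-discrete ideal lattice of $C[0,1]$.
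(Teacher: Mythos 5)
Your overall skeleton (lift the $\FK$-isomorphism to an invertible element of a bivariant theory over $[0,1]$, then quote Kirchberg's classification theorem) matches the paper, and Step B is handled exactly as you describe, via \cite{Kirchberg} together with the passage from $\E(X)$- to $\KK(X)$-equivalences from \cite{DM}. The gap is in Step A. You posit a two-term UCT short exact sequence computing $\KK(I;A,B)$ by $\Ext^1_{\Nattrafo}$ and $\Hom_{\Nattrafo}$ over the category $\Nattrafo$ indexed by all subintervals of $[0,1]$, and you describe its construction as a ``secondary, largely foundational task''. No such sequence is available, and in the stated generality it is false: for fields with torsion in the $\K$-theory of the fibers the comparison map $\E_*(I;A,B)\to\Hom_{\Nattrafo}\bigl(\FK(A),\FK(B)\bigr)$ fails to be surjective (see the remark following Theorem~\ref{thm:general_classification} and Example~6.5 of \cite{DM}), which is incompatible with your proposed exact sequence. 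The essential idea missing from your proposal is to approximate $I$ by the finite non-Hausdorff $T_0$-quotients $I_n$, which are accordion spaces, to apply the UCT of \cite{BK} over each $I_n$, and to assemble the resulting sequences through the Milnor $\varprojlim$--$\varprojlim\nolimits^1$ sequence of Proposition~\ref{pro:finite_approximation}; this is also why one works in $\E(X)$-theory rather than $\KK(X)$-theory until the final step. The outcome is only a four-term exact sequence whose last term is $\varprojlim\nolimits^1\Ext^1_{\Nattrafo_n}$, and the role of the rationality hypothesis is precisely to kill that obstruction.

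Two further points. First, your mechanism for the $\Ext^1$-vanishing is off: you propose to show that $\FK$ of a rational field is projective over a rationalised category, but divisible groups are not projective as Abelian groups, and the exactness constraints on $\Nattrafo_n$-modules make freeness implausible. What is actually used is \emph{injectivity} of the target: over an accordion space a module that is exact and pointwise divisible is an injective $\Nattrafo_n$-module (Proposition~\ref{pro:projective_injective_modules}), so $\Ext^1_{\Nattrafo_n}\bigl(\FK_n(A),\FK_n(SB)\bigr)=0$ for every $n$ and the boundary map vanishes for trivial reasons. Second, the hypothesis only gives rationality of $\K_*(A(x))$ for the fibers, whereas the argument needs divisibility of $\K_*(A(Y))$ for \emph{every} locally closed interval $Y$; you assert this without justification. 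The paper derives it by showing that each fiber absorbs $M_\Q$ (Kirchberg--Phillips), hence $A$ itself absorbs $M_\Q$ by \cite{HRW}, and then invoking the K\"unneth formula. This step is not cosmetic and needs to be included.
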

The continuous fields classified by this theorem include fields that are nowhere locally trivial. 
It is for this reason that one needs to include infinitely many subintervals of $[0,1]$ in any complete invariant.
However it suffices to consider intervals whose endpoints belong to a countable dense subset of $[0,1]$.
The result does not extend to continuous fields of Kirchberg algebras if torsion is allowed, as we will explain shortly.

The main idea of our approach is to combine the following three crucial ingredients:
\begin{itemize}
\item
Eberhard Kirchberg's isomorphism theorem for non-simple nuclear $\Cuntz_\infty$-absorbing \Cstar{}al\-ge\-bras~\cite{Kirchberg},
\item
the results from \cite{DM}  which relate $\E$\nb-the\-ory over a second countable space $X$ with the corresponding version of $\KK$-theory and with $\E$\nb-the\-ory groups over finite approximating spaces of $X$,
\item
the universal coefficient theorem for accordion spaces from \cite{BK} (generalizing results from \cites{RS,Bonkat:Thesis,Restorff:Thesis,MN:Filtrated}) including a description of projective and injective objects in the target category of filtrated $\K$-theory.
\end{itemize}
The relevance of accordion spaces in this framework is due to the fact that sufficiently many non-Hausdorff finite approximating spaces of the unit interval are accordion spaces.

A major difficulty in any attempt to use the result of \cite{Kirchberg} is the computation of the group
$\KK(X;A,B)$ or at least a quotient of this group which allows to detect $\KK(X)$-equivalences.
In \cite{DM},  the second named author and Ralf Meyer proved a universal \emph{multi}-coefficient theorem (abbreviated UMCT) for separable $\Cont(X)$-algebras over a totally disconnected compact metrizable space~$X$. As a consequence, by Kirchberg's isomorphism theorem~\cite{Kirchberg}, separable stable continuous fields over such spaces whose fibres are UCT  Kirchberg algebras   are classified by an invariant the authors call \emph{filtrated $\K$-theory with coefficients}. This result is also implicit in \cite{Dadarlat-Pasnicu:Continuous_fields}.

The  filtrated $\K$-theory with coefficients of \cite{DM} comprises the \(\K\)\nobreakdash-the\-ory with coefficients (the $\Lambda$-modules defined in \cite{DL}, also called \emph{total $\K$-theory}) of all distinguished subquotients of the given field, along with the action of all natural maps between these groups. It is demonstrated in~\cite{DM}, generalising a result from~\cite{Dadarlat-Eilers:Bockstein}, that coefficients are necessary for such a classification result over any infinite metrizable compact space. This means that filtrated $\K$\nb-theory (without coefficients) can only be a classifying invariant on subclasses of fields with special $\K$-theoretical properties and this explains the need for additional assumptions in our results.
For comparison let us recall that the classification result of \cite{DE} is restricted to fields whose fibers have torsion-free $\K_0$-groups and vanishing $\K_1$-groups or vice versa.

The construction of an effective filtrated $\K$-theory with coefficients for \Cstar{}algebras over the unit interval remains
an open problem. In the final Section~\ref{sec:coefficients} we describe some of the technical difficulties that are encountered in potential constructions of such an invariant.

\section{Preliminaries}

In this section we summarize definitions and results by various authors which we shall use later. We make the convention $\N=\{1,2,3,\ldots\}$.

\subsection{\Cstar{}algebras over topological spaces}

Let~$X$ be any topological space. Recall from~\cite{MN:Bootstrap}:

\begin{definition}
A \emph{\Cstar{}algebra over~$X$} is a \Cstar{}algebra~$A$ equipped with a continuous map $\Prim(A)\to X$.
\end{definition}

\begin{definition}
Let~$A$ be a \Cstar{}algebra over~$X$. Let~$U\subseteq X$ be an open subset. Taking the preimage under the map $\Prim(A)\to X$, we may naturally associate the \emph{distinguished ideal} $A(U)\subseteq A$ to~$U$. A morphism of \Cstar{}algebras over~$X$ is a \Star{}homomorphism preserving all distinguished ideals.

A subset $Y\subset X$ is called \emph{locally closed} if it can be written as a difference $U\setminus V$ of two open subsets $V\subseteq U\subseteq X$. It can be shown that the \emph{distinguished subquotient} $A(Y)\defeq A(U)/A(V)$ is well-defined.
\end{definition}

We assume that~$X$ is locally compact Hausdorff in the following two definitions.

\begin{definition}
A \emph{$C_0(X)$}-algebra is a \Cstar{}algebra~$A$ equipped with a non-degenerate \Star{}homomorphism from $C_0(X)$ to the center of the multiplier algebra of~$A$. A morphism of $C_0(X)$-algebras is a $C_0(X)$-linear \Star{}ho\-mo\-mor\-phism.
\end{definition}

The category of \Cstar{}algebras over~$X$ and the category of $C_0(X)$-algebras are isomorphic (see \cite{MN:Bootstrap}*{Proposition~2.11}). We denote the category of separable \Cstar{}algebras over~$X$ by 
$\Cstarsep(X)$.

\begin{definition}
For $x\in X$ and a $C_0(X)$-algebra~$A$, we denote the quotient map $A\twoheadrightarrow A({x})$ onto the fiber by $\pi_x$. The algebra~$A$ is called \emph{continuous} if the function $x\mapsto\lVert\pi_x(a)\rVert$ is a continuous function on~$X$ for every $a\in A$.
\end{definition}

\subsection{Bivariant \texorpdfstring{$\K$}{K}-theory for \Cstar{}algebras over topological spaces}
Let~$X$ be a second countable topological space. 
Let us recall that \(\KKcat(X)\) is the triangulated category that extends $\KK$-theory to  separable \Cstar{}algebras over~$X$, see \cite{MN:Bootstrap}.
In~\cite{DM}, the second named author and Meyer define a version of $\E$\nb-the\-ory for separable \Cstar{}algebras over~$X$ and establish its basic properties. This construction yields a triangulated category $\Ecat(X)$ and a functor $\Cstarsep(X)\to\Ecat(X)$ which is characterized by a universal property. We recall two results which are of particular importance for us.

Let $\mathcal U=(U_n)_{n\in\N}$ be an ordered basis for the topology on~$X$. Denote by~$X_n$ the finite topological space, which arises as the $T_0$-quotient of~$X$ equipped with the topology generated by the set $\{U_1,\ldots,U_n\}$. Observe that we have a projective system of spaces $\cdots\twoheadrightarrow X_2\twoheadrightarrow X_1\twoheadrightarrow X_0$ together with compatible maps $X\twoheadrightarrow X_n$. By functoriality in the space variable, we obtain a projective sequence of triangulated categories $\bigl(\Ecat(X_n)\bigr)_{n\in\N}$ together with compatible functors $\Ecat(X)\to\Ecat(X_n)$.

\begin{proposition}[\cite{DM}*{Theorem 3.2}]
  \label{pro:finite_approximation}
Let~$A$ and~$B$ be separable \Cstar{}algebras over~$X$. Then there is a natural short exact sequence of $\Z/2$-graded Abelian groups
  \[
  \varprojlim\nolimits^1 \E_{*+1}(X_n;A,B)
  \into \E_*(X;A,B) \prto
  \varprojlim \E_*(X_n;A,B).
  \]
\end{proposition}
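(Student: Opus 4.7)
The goal is a Milnor-type short exact sequence relating \(\E_*(X;A,B)\) to the tower \(\bigl(\E_*(X_n;A,B)\bigr)_{n\in\N}\). My plan is to realise \(\E(X;\blank,\blank)\) as a kind of homotopy limit of the \(\E(X_n;\blank,\blank)\) along the projective system \(\cdots\prto X_2\prto X_1\prto X_0\), and then invoke the classical Milnor \(\varprojlim^1\)-sequence for towers of Abelian groups (or, better, for sequential limits in a stable setting, e.g.\ a triangulated category equipped with countable products).

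First I would reformulate the topological input: since \(\mathcal U=(U_n)\) is a basis, the set of open subsets of~\(X\) is recovered from the generating sets \(\{U_1,\ldots,U_n\}\) by passing to \(\varprojlim X_n\), so the \(\Cont(X)\)-algebra structure on a separable \Cstar{}algebra~\(A\) is equivalent to a compatible family of \(\Cont(X_n)\)-algebra structures on fixed underlying \Cstar{}algebras, together with the compatibility under the quotient maps \(X_n\prto X_{n-1}\). Concretely, pulling back~\(A\) along \(X\prto X_n\) yields a \Cstar{}algebra \(A^{(n)}\) over~\(X_n\) (determined by specifying only the ideals \(A(U_1),\ldots,A(U_n)\)), and the sequence \((A^{(n)})\) is the natural finite approximation of~\(A\).

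Next I would exhibit \(\E(X;A,B)\) as the homotopy limit of the tower \(\bigl(\E(X_n;A^{(n)},B^{(n)})\bigr)\). The forgetful/restriction functors \(\Ecat(X)\to\Ecat(X_n)\) are the compatible functors mentioned in the text; the key structural claim is that an \(X\)-equivariant asymptotic morphism \(A\dashrightarrow B\) (in the sense of the model for \(\Ecat(X)\) built in \cite{DM}) is exactly a coherent sequence of \(X_n\)-equivariant asymptotic morphisms \(A^{(n)}\dashrightarrow B^{(n)}\), up to the usual equivalence. Under this identification, the standard Milnor lim-1 sequence for a tower of Abelian groups
\[
\varprojlim\nolimits^1 \E_{*+1}(X_n;A,B) \into \varprojlim^{\mathrm h}\E_*(X_n;A,B) \prto \varprojlim \E_*(X_n;A,B)
\]
produces the desired sequence once the middle term is identified with \(\E_*(X;A,B)\). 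The \(\Z/2\)-grading is preserved because all functors in sight are triangulated.

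The main obstacle is the identification of \(\E_*(X;A,B)\) with the homotopy limit. This requires on the one hand a surjectivity/approximation statement (every \(X\)-equivariant asymptotic morphism is, up to homotopy, built from compatible finite-level data) and on the other hand an injectivity/lifting statement (a tower of compatible classes lifts to a class over~\(X\), uniquely up to a \(\varprojlim^1\)-obstruction). Both are sensitive to the precise construction of \(\Ecat(X)\) and its universal property; in particular one needs a good model with countable products or sequential homotopy limits so that the Milnor sequence applies in the bivariant setting. Once this categorical setup is in place, the short exact sequence above is a formal consequence.
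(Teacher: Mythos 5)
First, a point of comparison: the paper does not prove this proposition at all---it is quoted verbatim from \cite{DM}*{Theorem 3.2}, so there is no internal argument to measure your proposal against. Judged on its own terms, your overall plan---realise $\E_*(X;A,B)$ as a homotopy limit of the tower $\bigl(\E_*(X_n;A,B)\bigr)_{n\in\N}$ and invoke a Milnor $\varprojlim\nolimits^1$-sequence---is the right strategy and is in the spirit of the cited source. Your preliminary reductions are also sound: a \Cstar{}algebra over~$X$ pushes forward along $X\prto X_n$ to one over~$X_n$, and $X$-equivariance of an asymptotic morphism can be tested on the generating open sets, hence amounts to $X_n$-equivariance for every~$n$.

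The genuine gap is that the one step you defer---``the identification of $\E_*(X;A,B)$ with the homotopy limit''---\emph{is} the theorem; declaring everything else a formal consequence once that is in place is essentially a reduction of the statement to itself. Two points need actual work. First, the Milnor sequence you invoke applies to a tower of \emph{objects} in a triangulated category with countable products (or to a tower of fibrations), whereas here one has a tower of \emph{categories} $\Ecat(X_n)$ with compatible restriction functors $\Ecat(X)\to\Ecat(X_n)$; there is no ready-made ``homotopy limit of a tower of Abelian groups'' receiving a comparison map from $\E_*(X;A,B)$. To make the formal machinery bite one must either corepresent the functors $\E(X_n;A,\blank)$ inside $\Ecat(X)$ by an inductive system of objects and run a telescope argument there, or work directly with asymptotic morphisms. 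Second, in the direct approach the surjectivity of $\E_*(X;A,B)\to\varprojlim\E_*(X_n;A,B)$ requires splicing a coherent sequence of representatives and connecting homotopies into a single $X$-equivariant asymptotic morphism via a reparametrization\slash diagonal argument, and identifying the kernel with $\varprojlim\nolimits^1\E_{*+1}(X_n;A,B)$ requires the analogous argument one suspension down; neither is automatic, and both depend on the cylinder and composition structure of the specific model for $\Ecat(X)$ constructed in \cite{DM}. As written, your proposal is a correct plan together with an accurate list of what remains to be proved, rather than a proof.
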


\begin{definition}
The bootstrap class~$\Bootstrap_\E$ consists of all separable \Cstar{}algebras that are equivalent in E-theory to a commutative
\Cstar{}algebra. The bootstrap class~$\Bootstrap_\E(X)$ consists of all separable \Cstar{}algebras over $X$ such that $A(U)$ belongs to~$\Bootstrap_\E$ for every open subset $U\subseteq X$. 
\end{definition}

\begin{proposition}[\cite{DM}*{Theorem 4.6}]
  \label{pro:invertibility_criterion}
Let~$A$ and~$B$ be separable \Cstar{}algebras over~$X$ belonging to the bootstrap class~$\Bootstrap_\E(X)$. An element in $\E(X;A,B)$ is invertible if and only if the induced map $\K_*\bigl(A(U)\bigr)\to\K_*\bigl(B(U)\bigr)$ is invertible for every open subset~$U$ of~$X$.
\end{proposition}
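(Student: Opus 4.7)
The forward direction is immediate: any invertible $\phi \in \E(X;A,B)$ restricts to invertible elements $\phi(U) \in \E(A(U),B(U))$ for each open $U \subseteq X$, and E-theoretic equivalences induce isomorphisms on $\K_*$.

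For the converse, the plan is to show that the cone $C$ of $\phi$, formed in the triangulated category $\Ecat(X)$, is a zero object; the exact triangle $A \to B \to C \to \Sigma A$ will then force $\phi$ to be invertible. Since $\Bootstrap_\E(X)$ is stable under cones (being a thick triangulated subcategory), we have $C \in \Bootstrap_\E(X)$, and the six-term exact sequence in K-theory gives $\K_*\bigl(C(U)\bigr)=0$ for every open $U \subseteq X$. The task is reduced to the assertion: any object $C \in \Bootstrap_\E(X)$ with $\K_*\bigl(C(U)\bigr) = 0$ for all open $U$ is isomorphic to zero in $\Ecat(X)$.

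To establish this, I will apply Proposition~\ref{pro:finite_approximation} to $\E(X;C,C)$: it suffices to show that $\mathrm{id}_C$ vanishes in each $\E(X_n;C,C)$, i.e.\ that $C \cong 0$ in every $\Ecat(X_n)$. For the finite $T_0$-spaces $X_n$ the argument proceeds by induction on the number of points. The base case of a one-point space is the classical UCT-style statement: a bootstrap \Cstar{}algebra with vanishing K-theory is a zero object in $\Ecat$. For the inductive step, I would choose a proper nonempty open subset $U \subsetneq X_n$ and invoke the exact triangle
\[
C(U) \to C \to C(X_n \setminus U) \to \Sigma C(U)
\]
in $\Ecat(X_n)$ induced by the extension of distinguished ideal and quotient. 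Both outer terms, viewed as objects of $\Ecat(U)$ and $\Ecat(X_n \setminus U)$ respectively, satisfy the inductive hypothesis over spaces with strictly fewer points, so they vanish and therefore so does $C$.

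The hard part will be the bookkeeping for the inductive step: one needs the passage between $\Ecat(X_n)$, $\Ecat(U)$ and $\Ecat(X_n \setminus U)$ (extension-by-zero from the open side, restriction to the closed side) to be compatible with the bootstrap condition and with the hypothesis of vanishing K-theory on open subsets, and one needs the triangle above to be exact in $\Ecat(X_n)$. The first point follows from the definition of $\Bootstrap_\E(\cdot)$ together with the fact that open subsets of $U$ or of $X_n \setminus U$ pull back to open subsets of $X_n$; the second requires the analogue over $X_n$ of the standard extension triangle in E-theory, which is built into the construction of $\Ecat(X_n)$ in~\cite{DM}.
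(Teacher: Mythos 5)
The paper offers no proof of this proposition: it is imported verbatim as \cite{DM}*{Theorem~4.6}, so the only meaningful comparison is with the argument in \cite{DM}, whose strategy your sketch essentially reconstructs. Your reductions are sound: $\Bootstrap_\E(X)$ is closed under cones because restriction to each open $U$ is triangulated and $\Bootstrap_\E$ is thick in $\Ecat$; and once $C\cong 0$ in every $\Ecat(X_n)$, both $\varprojlim$ and $\varprojlim^1$ in Proposition~\ref{pro:finite_approximation} vanish, so $\E_*(X;C,C)=0$, $\mathrm{id}_C=0$, and $C\cong 0$ in $\Ecat(X)$. The base case via the one-variable UCT for $\Bootstrap_\E$ is also fine.

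There is, however, one step that is wrong as written, though repairable. In the inductive step you justify applying the inductive hypothesis to $C(X_n\setminus U)$ by asserting that open subsets of $X_n\setminus U$ pull back to open subsets of $X_n$. This holds for the open piece $U$ but fails for the closed piece: a relatively open subset $V\subseteq X_n\setminus U$ has the form $V=(W\cup U)\setminus U$ for some open $W\subseteq X_n$ and is in general only locally closed in $X_n$, so the vanishing of $\K_*\bigl(C(V)\bigr)$ is \emph{not} among your hypotheses. It must instead be deduced from the six-term exact sequence of the extension $C(U)\into C(W\cup U)\prto C(V)$, whose outer $\K$-theory terms vanish because $U$ and $W\cup U$ are genuinely open in $X_n$; the same extension, together with closure of $\Bootstrap_\E$ under extensions, shows that $C(V)$ lies in $\Bootstrap_\E$, so that $C(X_n\setminus U)$ satisfies the inductive hypothesis over $X_n\setminus U$. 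With this correction the induction closes. The remaining point you flag without proof --- exactness in $\Ecat(X_n)$ of the triangle $C(U)\to C\to C(X_n\setminus U)\to\Sigma C(U)$ --- is indeed established in \cite{DM} and is legitimate to cite.
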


\subsection{Continuous fields of Kirchberg algebras}

In this subsection we assume that $X$ is a finite-dimensional, compact, metrizable topological space.

\begin{proposition}[]
  \label{pro:Kirchberg_class}
Let~$A$ be a separable continuous $C(X)$-algebra whose fibers are stable Kirchberg algebras. Then~$A$ is stable, nuclear and $\Cuntz_\infty$\nb-ab\-sorb\-ing. 
\end{proposition}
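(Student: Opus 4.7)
The plan is to verify the three conclusions independently, each by invoking a known passage-from-fibers theorem for continuous $C(X)$-algebras over a finite-dimensional compact metrizable base.

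For nuclearity, I would observe that every fiber $A(x)$ is a Kirchberg algebra, hence nuclear. By the classical result of Bauval (later refined by Blanchard), a separable continuous $C(X)$-algebra whose fibers are all nuclear is itself nuclear; this step does not even require finite-dimensionality of $X$.

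For $\Cuntz_\infty$-absorption, I would appeal to the Hirshberg--R{\o}rdam--Winter theorem on $\mathcal{D}$-absorbing continuous fields: if $X$ is finite-dimensional compact metrizable and $A$ is a separable continuous $C(X)$-algebra whose fibers all absorb a strongly self-absorbing \Cstar{}algebra~$\mathcal{D}$, then $A\cong A\otimes\mathcal{D}$. Since $\Cuntz_\infty$ is strongly self-absorbing and every Kirchberg algebra is $\Cuntz_\infty$-absorbing by Kirchberg's theorem, the hypothesis holds with $\mathcal{D}=\Cuntz_\infty$ and gives $A\cong A\otimes\Cuntz_\infty$.

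For stability, I would invoke the companion Hirshberg--R{\o}rdam--Winter theorem on stability of continuous fields: a separable continuous $C(X)$-algebra over a finite-dimensional compact metrizable space whose fibers are stable is itself stable. Since $A$ is separable and has stable fibers by assumption, this immediately yields stability of $A$.

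The point of substance lies entirely in the finite-dimensionality hypothesis on $X$: without it, counter-examples due to Hirshberg, Rørdam and Dadarlat show that neither strongly self-absorbing absorption nor stability need pass from fibers to the total algebra. The two Hirshberg--R{\o}rdam--Winter theorems are therefore the essential (and nontrivial) ingredients, and once they are applied alongside Bauval's nuclearity theorem, no further argument is needed.
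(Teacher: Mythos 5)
Your proof is correct and takes essentially the same route as the paper: nuclearity comes from Bauval's theorem, and the remaining two properties are detected fiberwise using finite-dimensionality of the base, with the two Hirshberg--R{\o}rdam--Winter theorems you invoke being precisely one of the two references the paper gives for this step (``see \cite{DE}*{Theorem~7.4} and \cite{HRW}''). The only difference is one of emphasis: the paper packages $\Cuntz_\infty$-absorption and stability into the single statement $A\otimes\Cuntz_\infty\otimes\Compacts\cong A$ and attributes it primarily to the pure-infiniteness chain of Blanchard--Kirchberg and Kirchberg--R{\o}rdam together with R{\o}rdam's stability criterion, whereas you apply the two fiberwise-detection theorems of \cite{HRW} directly.
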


\begin{proof}
 Bauval shows in \cite{Bauval}*{Th\'{e}or\`{e}me ~7.2} that~$A$, being continuous and having nuclear fibers, is nuclear (in fact $C(X)$-nuclear). A combination of results by Blanchard, Kirchberg and R{\o}rdam in \cites{Blanchard-Kirchberg,Kirchberg-Rordam:Non-simple,Kirchberg-Rordam:Infinite,Rordam:Stable} implies that $A\otimes\Cuntz_\infty\otimes\Compacts\cong A$, see \cite{DE}*{Theorem~7.4} and \cite{HRW}.
\end{proof}

\begin{corollary}
  \label{cor:classification}
Let~$A$ and~$B$ be separable continuous $C(X)$-algebras whose fibers are stable Kirchberg algebras. Then every $\E(X)$-equivalence between~$A$ and~$B$ lifts to a $C(X)$-linear \Star{}isomorphism.
\end{corollary}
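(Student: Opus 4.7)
The plan is to reduce the statement to Kirchberg's isomorphism theorem~\cite{Kirchberg} for separable, nuclear, stable, $\Cuntz_\infty$-absorbing \Cstar{}algebras over~$X$, which asserts that $\KK(X)$-equivalence implies $C(X)$-linear \Star{}isomorphism. Since the hypothesis gives an $\E(X)$-equivalence rather than a $\KK(X)$-equivalence, the task is twofold: verify that the nuclearity hypotheses of Kirchberg's theorem are in place, and then upgrade the $\E(X)$-equivalence to a $\KK(X)$-equivalence.

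First, I would invoke Proposition~\ref{pro:Kirchberg_class} directly: it tells us that both~$A$ and~$B$ are separable, stable, nuclear and $\Cuntz_\infty$-absorbing. This puts both algebras squarely in the scope of Kirchberg's classification theorem over~$X$.

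Next, I would recall from~\cite{DM} that for separable \Cstar{}algebras over~$X$ with nuclear source, the natural comparison map $\KK(X;A,B)\to\E(X;A,B)$ is an isomorphism (the $\E(X)$-theory of~\cite{DM} is constructed precisely so that it coincides with $\KK(X)$-theory in the nuclear case, just as in the classical setting of Connes--Higson). Applying this to both~$A\to B$ and a prospective inverse $B\to A$, the given $\E(X)$-equivalence $\alpha\in\E(X;A,B)$ is the image of a (necessarily unique) class $\tilde\alpha\in\KK(X;A,B)$, whose candidate inverse is likewise pulled back from~$\E(X;B,A)$; functoriality of the comparison map ensures that $\tilde\alpha$ is a $\KK(X)$-equivalence.

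Finally, Kirchberg's isomorphism theorem applied to $\tilde\alpha$ produces a $C(X)$-linear \Star{}isomorphism $A\cong B$ whose $\KK(X)$-class is $\tilde\alpha$, and hence whose $\E(X)$-class is the original $\alpha$. The only real obstacle is the passage from $\E(X)$ to $\KK(X)$ for nuclear algebras, but this is settled by~\cite{DM}; the rest is a direct application of previously cited results.
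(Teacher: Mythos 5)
Your proposal is correct and follows essentially the same route as the paper: the paper's proof likewise first converts the $\E(X)$-equivalence into a $\KK(X)$-equivalence via the comparison result of \cite{DM} (Theorem~5.4 there) and then applies Kirchberg's isomorphism theorem, with Proposition~\ref{pro:Kirchberg_class} supplying the stability, nuclearity and $\Cuntz_\infty$-absorption hypotheses. Your extra remarks on functoriality of the comparison map and uniqueness of the lift are fine but not needed beyond the citation.
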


\begin{proof}
From \cite{DM}*{Theorem 5.4} we see that the given $\E(X)$-equivalence is induced by a $\KK(X)$-equivalence.
By Proposition~\ref{pro:Kirchberg_class}, we can apply Kirchberg's isomorphism theorem~\cite{Kirchberg}.
\end{proof}

\begin{proposition}
  \label{pro:bootstrap_class}
Let~$A$ be a separable nuclear continuous $C(X)$-algebra whose fibers satisfy the \textup{UCT}. Then $A$ belongs to the $\E(X)$-theoretic bootstrap class $\Bootstrap_\E(X)$.
\end{proposition}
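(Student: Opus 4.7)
The plan is to reduce the statement to the claim that $A$ itself lies in the ordinary bootstrap class~$\Bootstrap_\E$, and then to invoke the theorem of Dadarlat ensuring the UCT for continuous fields of nuclear UCT algebras over finite-dimensional compact metrizable bases.

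For the reduction, let $U \subseteq X$ be any open subset, and set $Z \defeq X \setminus U$. Then~$Z$ is closed in~$X$, hence itself finite-dimensional, compact and metrizable. The distinguished quotient $A(Z) = A/A(U)$ is a separable nuclear continuous $C(Z)$-algebra whose fibres agree with the fibres of~$A$ over the points of~$Z$, so in particular they satisfy the UCT. The short exact sequence $A(U) \into A \prto A(Z)$ yields an exact triangle in $\Ecat$, and $\Bootstrap_\E$ is a thick triangulated subcategory. Hence it suffices to prove the following auxiliary statement: \emph{every separable nuclear continuous $C(Y)$-algebra over a finite-dimensional compact metrizable space~$Y$ with UCT fibres lies in~$\Bootstrap_\E$.} Applying this auxiliary statement both to~$A$ and to~$A(Z)$, the two-out-of-three property for the above exact triangle produces $A(U) \in \Bootstrap_\E$, as required.

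For the auxiliary statement I would invoke Dadarlat's UCT theorem for continuous fields over finite-dimensional compact metrizable spaces: it asserts that such a field itself satisfies the UCT whenever each fibre does. Since on nuclear separable \Cstar{}algebras the $\E$\nobreakdash-theoretic bootstrap class coincides with the $\KK$\nobreakdash-theoretic UCT class, membership in $\Bootstrap_\E$ follows. The main obstacle is precisely this external input: Dadarlat's proof proceeds by induction on~$\dim Y$, reducing via Mayer--Vietoris in $\KK$\nobreakdash-theory to the case of constant fields on closed balls, which is trivial. An internal alternative would combine Proposition~\ref{pro:finite_approximation} with a Milnor $\varprojlim^1$\nobreakdash-argument via the bootstrap subcategories~$\Bootstrap_\E(X_n)$ of the finite approximations; however, verifying that the image of~$A$ in~$\Ecat(X_n)$ belongs to~$\Bootstrap_\E(X_n)$ demands essentially the same finite-dimensionality input as Dadarlat's theorem, so this route offers no genuine simplification.
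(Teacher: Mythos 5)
Your proof is correct and rests on the same external input as the paper's, namely Dadarlat's fiberwise UCT theorem for separable nuclear continuous fields over finite-dimensional compact metrizable spaces (\cite{Dadarlat:Fibrewise}*{Theorem~1.4}). The paper simply applies that theorem to each $A(U)$ directly, viewed as a continuous field over the (locally compact) open set $U$, whereas you deduce $A(U)\in\Bootstrap_\E$ from the cases of $A$ and $A(X\setminus U)$ via the exact triangle and two-out-of-three; this is a minor bookkeeping difference rather than a genuinely different route.
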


\begin{proof}
This follows from \cite{Dadarlat:Fibrewise}*{Theorem~1.4} applied to every open subset of~$X$.
\end{proof}

\subsection{Filtrated \texorpdfstring{$\K$}{K}-theory over finite spaces}

In this subsection we assume that~$X$ is a finite $T_0$-space.

\begin{definition}
Let $\Ab^{\Z/2}$ be the category of $\Z/2$-graded Abelian groups and $\Z/2$-graded homomorphisms.
We denote the collection of non-empty, connected, locally closed subsets of~$X$ by $\Loclo(X)^*$. For $Y\in\Loclo(X)^*$, we have a functor $\FK^X_Y\colon\Ecat(X)\to\Ab^{\Z/2}$ taking~$A$ to $\K_*\bigl(A(Y)\bigr)$. Let $\Nattrafo^X$ be the $\Z/2$-graded pre-additive category whose object set is $\Loclo(X)^*$ and whose morphisms from~$Y$ to~$Z$ are the natural transformations from~$\FK^X_Y$ to~$\FK^X_Z$ regarded as functors from separable \Cstar{}algebras over~$X$ with $\Z/2$-graded morphism groups $\E_*(X;\blank,\blank)$ to $\Z/2$-graded Abelian groups with arbitrary group homomorphisms. The collection $\bigl(\FK_Y^X(A)\bigr)_{Y\in\Loclo(X)^*}$ has a natural graded module structure over $\Nattrafo^X$. This module is denoted by $\FK^X(A)$. Hence we have a functor $\FK^X\colon\Ecat(X)\to\GMod{\Nattrafo^X}$.
\end{definition}

\begin{remark}
If the space~$X$ is not too complicated, it is possible to describe the category $\Nattrafo^X$ in explicit terms. 
Suppose for instance that~$X$ is an accordion space in the sense of~\cite{BK}. Then $\Nattrafo^X$ is generated by six-term sequence transformations corresponding to inclusions of distinguished subquotients and an explicit generating list of relations can be given, see \cite{BK}.
\end{remark}

\begin{proposition}[\cite{BK}*{Theorem 8.9}]
  \label{pro:UCT_accordion}
Let~$X$ be an accordion space. Let~$A$ and~$B$ be separable \Cstar{}algebras over~$X$. Assume that~$A$ belongs to the bootstrap class~$\Bootstrap_\E(X)$. Then there is a natural short exact sequence of $\Z/2$-graded Abelian groups
\begin{multline*}
  \Ext^1_{\Nattrafo^X}\bigl(\FK^X(A),\FK^X(SB)\bigr)
  \into \E_*(X;A,B)\\ \prto
  \Hom_{\Nattrafo^X}\bigl(\FK^X(A),\FK^X(B)\bigr).
\end{multline*}
\end{proposition}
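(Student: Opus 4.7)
The plan is to apply the Meyer--Nest machinery of homological algebra in triangulated categories, specialised to the accordion setting. Define the homological ideal $\mathfrak{I} \defeq \ker \FK^X$ in $\Ecat(X)$, consisting of morphisms annihilated by the functor $\FK^X\colon \Ecat(X)\to\GMod{\Nattrafo^X}$. I would first verify that $\mathfrak{I}$ is compatible with countable coproducts and mapping cones, and that it admits enough $\mathfrak{I}$-projective objects, each of which lies in $\Bootstrap_\E(X)$. Candidate projective generators are produced by suitably stabilising the representing algebras $\Cont_0(U)$ for open $U\subseteq X$, their quotients $\Cont_0(U)/\Cont_0(V)$, and suspensions thereof, so that their $\FK^X$ realises a representable free $\Nattrafo^X$-module at a prescribed $Y \in \Loclo(X)^*$ in either parity.

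The key algebraic input, specific to accordion spaces, is that $\GMod{\Nattrafo^X}$ is \emph{hereditary}: every module admits a projective resolution of length one, equivalently $\Ext^n_{\Nattrafo^X}$ vanishes for $n\geq 2$. Using the explicit generators-and-relations description of $\Nattrafo^X$ recorded in \cite{BK} — the six-term sequence transformations attached to inclusions of distinguished subquotients, together with their listed relations — one checks that $\Nattrafo^X$ is Morita equivalent to (a graded version of) the path algebra of a zig-zag quiver of type $A$ with relations, whose global dimension is one.

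Combining these two inputs, for $A\in\Bootstrap_\E(X)$ I would lift a length-one projective resolution of the $\Nattrafo^X$-module $\FK^X(A)$ to a distinguished triangle
\[
P_1 \to P_0 \to A \to \Sigma P_1
\]
in $\Ecat(X)$ with $P_0, P_1$ chosen among the $\mathfrak{I}$-projective generators. Applying $\E_*(X;\blank,B)$ produces a six-term exact sequence; the defining property of $\mathfrak{I}$-projectivity gives a natural identification $\E_*(X;P_i,B) \cong \Hom_{\Nattrafo^X}\bigl(\FK^X(P_i),\FK^X(B)\bigr)$, under which the connecting map becomes the map induced by $P_1\to P_0$. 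Cutting the six-term sequence at this map yields the claimed short exact sequence, with kernel the cokernel of $\Hom(\FK^X(P_0),\FK^X(B))\to\Hom(\FK^X(P_1),\FK^X(B))$, which computes $\Ext^1_{\Nattrafo^X}\bigl(\FK^X(A),\FK^X(SB)\bigr)$. Naturality in $A$ and $B$ is automatic from the construction.

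The main obstacle is the hereditary property. For non-accordion finite $T_0$-spaces the category $\GMod{\Nattrafo^X}$ can have higher $\Ext$-groups, the spectral sequence replacing the short exact sequence need not degenerate, and genuine counterexamples to a UCT for $\FK^X$ are known. Isolating the purely combinatorial reason why accordion shapes force global dimension one — and simultaneously constructing $\mathfrak{I}$-projective algebras realising each indecomposable projective $\Nattrafo^X$-module in both parities — is the technical heart of the argument and what confines the theorem to the accordion case.
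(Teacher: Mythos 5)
The paper does not actually prove this proposition: it is imported verbatim from \cite{BK}*{Theorem 8.9}, and the only argument supplied locally is the remark immediately following it, which justifies replacing $\KK_*(X;A,B)$ by $\E_*(X;A,B)$ (via $\KK(X;R,R)\cong\E(X;R,R)$ for the representing object $R$, so that $\FK^X$ remains the universal $\mathcal{I}$-exact stable homological functor on $\Ecat(X)$ and the general UCT of \cite{MN:Filtrated} applies). Your outline is essentially a reconstruction of how the cited result is proved: the Meyer--Nest homological ideal $\mathfrak{I}=\ker\FK^X$, enough $\mathfrak{I}$-projectives realising the representable $\Nattrafo^X$-modules, a length-one projective resolution of $\FK^X(A)$ lifted to a distinguished triangle $P_1\to P_0\to A$ (this lift is precisely where the hypothesis $A\in\Bootstrap_\E(X)$ enters), and the six-term sequence obtained by applying $\E_*(X;\blank,B)$. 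So the approach is the right one, and working directly in $\Ecat(X)$ rather than $\KKcat(X)$ is legitimate since the projective generators are (suspensions of quotients of) commutative algebras, for which $\KK$ and $\E$ agree.

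Two points are overstated. First, it is not claimed (and is not needed) that $\GMod{\Nattrafo^X}$ is hereditary; what \cite{BK} proves is that $\FK^X(A)$ has a projective resolution of length one for every separable \Cstar{}algebra $A$ over $X$. The mechanism is the characterisation of projective modules as \emph{exact} modules with free entries (cf.\ Proposition~\ref{pro:projective_injective_modules}): since $\FK^X(A)$ is exact, the kernel of an epimorphism from a projective module onto it is again exact and has free entries, hence is projective. Nothing is asserted about non-exact modules, and your proposed Morita equivalence with a type-$A$ path algebra glosses over the fact that exactness of the six-term sequences is a condition on modules not encoded in the relations of $\Nattrafo^X$; this is exactly the subtlety that makes the projectivity characterisation, rather than a global-dimension computation, the technical heart of \cite{BK}. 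Second, your identification $\E_*(X;P_i,B)\cong\Hom_{\Nattrafo^X}\bigl(\FK^X(P_i),\FK^X(B)\bigr)$ is the Yoneda-type computation for the representing objects and deserves the explicit remark that it holds in $\E$-theory as well as in $\KK$-theory; this is the one point the paper itself takes care to address.
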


Here we have replaced $\KK_*(X;A,B)$ by $\E_*(X;A,B)$ in the original statement. This is possible as we explain in the following remark.

\begin{remark} It was shown in \cite{BK} that $\FK^X(A)$ has a projective
resolution
of length $1$ for every separable \Cstar{}algebra $A$ over $X$. Regarding $\FK^X$ as a
functor from $\Ecat(X)$, it is the universal $\mathcal{I}$-exact stable homological
functor,
where $\mathcal{I}$ is now the ideal in $\Ecat(X)$ consisting of all elements inducing
zero
maps in $\FK^X$. This is because $\KK(X;R,R)\cong \E(X;R,R)$ by \cite{DM}*{Theorem~5.5},
where $R\in \Bootstrap(X)$ is the representing object for $\FK^X$. 
Now the result follows from the general UCT of~\cite{MN:Filtrated}.
(Here $\Bootstrap(X)$ denotes the $\KK$-theoretic bootstrap class of \Cstar{}algebras over~$X$ defined by Meyer--Nest in~\cite{MN:Bootstrap} as the smallest class of \Cstar{}algebras containing all one-dimensional \Cstar{}algebras over~$X$ and closed under certain operations. If~$A$ is a nuclear \Cstar{}algebra over~$X$, then $A$ belongs to $\Bootstrap(X)$ if and only if it belongs to $\Bootstrap_\E(X)$.) 
\end{remark}

Not every $\Nattrafo^X$-module belongs to the range of the invariant~$\FK^X$. In particular, $\FK^X(A)$ is an \emph{exact} $\Nattrafo^X$-module for every \Cstar{}algebra $A$ over~$X$ as defined in \cite{MN:Filtrated}*{Definition~3.5}.

\begin{proposition}
\label{pro:projective_injective_modules}
Let $X$ be an accordion space and $M$ an $\Nattrafo^X$-module. Then~$M$ is projective\textup{/}injective if and only if~$M$ is exact and the $\Z/2$-graded Abelian group $M(Y)$ is projective\textup{/}injective for every $Y\in\Loclo(X)^*$.
\end{proposition}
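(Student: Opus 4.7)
My approach is to work in the abelian category of exact $\Nattrafo^X$-modules and to construct, for each $Y\in\Loclo(X)^*$, an elementary projective object $P_Y$ representing evaluation at $Y$. Using the explicit description of $\Nattrafo^X$ as a pre-additive category generated by six-term sequence transformations subject to the relations listed in~\cite{BK}, I would define $P_Y$ as the exact module freely generated by one element at $Y$. A direct inspection using the accordion combinatorics shows that $P_Y$ is exact, that each $P_Y(Z)$ is a free $\Z/2$-graded abelian group of rank $0$ or $1$, and that there is a natural isomorphism
\[
\Hom_{\Nattrafo^X}(P_Y, M) \xrightarrow{\sim} M(Y)
\]
for every exact module $M$. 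In particular, each $P_Y$ is projective. Dually, the exactness of the six-term sequences supplies for each $Y$ an injective exact module $I_Y$ corepresenting $M\mapsto M(Y)^\vee$.

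For the ``only if'' direction, suppose $M$ is projective. Choosing a free cover $F_Y \prto M(Y)$ for each $Y$ gives an epimorphism $\bigoplus_Y P_Y \otimes F_Y \prto M$ which, by projectivity of $M$, admits a section. Hence $M$ is a direct summand of a direct sum of modules of the form $P_Y \otimes F_Y$ with $F_Y$ free abelian. Consequently $M$ is exact, and each $M(Y)$ is a direct summand of a free $\Z/2$-graded abelian group, i.e., projective. The injective case is entirely dual using the $I_Y$.

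For the ``if'' direction, assume $M$ is exact with each $M(Y)$ projective. Choose bases and form the canonical surjection $\varphi\colon F\defeq\bigoplus_Y P_Y\otimes M(Y) \prto M$. The kernel $K = \ker\varphi$ is again exact, because exact $\Nattrafo^X$-modules admit projective resolutions of length one over accordion spaces by~\cite{BK}. By this length-one resolution property the extension class in $\Ext^1_{\Nattrafo^X}(M,K)$ classifying $K\into F\prto M$ can be computed on a single relation and is determined by the componentwise maps $F(Y)\to M(Y)$; since each such surjection of projective abelian groups splits, the obstruction vanishes and $\varphi$ splits. Therefore $M$ is a direct summand of a projective module and hence projective, and the injective case proceeds dually.

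The main technical obstacle is the ``if'' direction: upgrading the pointwise splittings of $F(Y)\prto M(Y)$ to a module splitting of $\varphi$. The accordion hypothesis is essential here, since it keeps $\Loclo(X)^*$ combinatorially tractable and makes the category of exact $\Nattrafo^X$-modules hereditary, so that the $\Ext^1$ obstruction is concentrated in the finitely many six-term relations. The componentwise projectivity of each $M(Y)$ then kills this obstruction and yields the desired global splitting.
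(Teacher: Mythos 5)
Your ``only if'' direction is essentially sound in outline, though it silently relies on two facts that are themselves genuine computations for accordion spaces rather than formalities: that the representable modules $P_Y=\Nattrafo^X(Y,\blank)$ are exact, and that their entries $\Nattrafo^X(Y,Z)$ are free $\Z/2$-graded groups. (Your claim that each $P_Y(Z)$ has rank $0$ or $1$ is a by-product of the explicit computation of $\Nattrafo^X$ in \cite{BK}, not something one can see without it.) For what it is worth, the paper does not prove the proposition at all: it cites \cite{BK} for the projective half and invokes duality for the injective half, so you are attempting strictly more than the authors do.

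The genuine gap is in the ``if'' direction, which is where all the content lies. Having produced $K\into F\prto M$ with $F$ projective and all three modules exact, you assert that the class in $\Ext^1_{\Nattrafo^X}(M,K)$ ``can be computed on a single relation and is determined by the componentwise maps $F(Y)\to M(Y)$'', so that the componentwise splittings force a module splitting. No mechanism for this is given, and none can be purely formal: the implication ``exact with free entries $\Rightarrow$ projective'' is \emph{false} over non-accordion spaces (its failure over the four-point non-accordion spaces is precisely the source of the Meyer--Nest counterexamples), so a correct proof must use the accordion combinatorics at an identifiable step, which your sketch never does. The appeal to heredity does not supply it: projective resolutions of length one give $\Ext^2=0$, which says nothing about the vanishing of the specific $\Ext^1$ class at hand. (Heredity is also not needed to see that $K$ is exact --- that already follows from the long exact homology sequence associated to $0\to K\to F\to M\to 0$.) What is actually required, and what \cite{BK} provides, is a structure analysis of exact modules over these $A_n$-shaped categories --- classifying the indecomposable projectives and showing that an exact module with free entries decomposes into representable summands. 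Upgrading pointwise splittings of $F(Y)\prto M(Y)$ to a splitting of $\varphi$ is exactly the unproven step, and the same gap recurs in your dual treatment of the injective case.
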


\begin{proof}
The statement about projective modules is proven in~\cite{BK}. The claim about injective modules follows from a dual argument.
\end{proof}

\section{Finite approximations of the unit interval}

Let~$I=[0,1]$ be the unit interval. Choose, once and for all, a dense sequence $(d_n)_{n\in\N}$ in~$I$. For convenience, we may assume $d_m\neq d_n$ for $m\neq n$ and $d_n\not\in\{0,1\}$ for all $n\in\N$. Consider the ordered subbasis $\mathcal V=(V_n)_{n\in\N}$ for the topology on~$I$ given by $V_{2n-1}=[0,d_n)$ and $V_{2n}=(d_n,1]$; denote by~$I_n$ the $T_0$-quotient of~$I$ equipped with the topology generated by the set $\{V_1,\ldots,V_{2n}\}$.

Let~$A$ and~$B$ be separable \Cstar{}algebras over~$I$. Since the spaces~$I_n$ form a cofinal family in the projective sequence of approximations corresponding to the basis generated by the subbasis~$\mathcal V$ above, Proposition~\ref{pro:finite_approximation} yields a short exact sequence
  \begin{equation}
  \label{eq:lim_one_sequence0}
  \varprojlim\nolimits^1 \E_{*+1}(I_n;A,B)
  \into \E_*(I;A,B) \prto
  \varprojlim \E_*(I_n;A,B).
  \end{equation}
We are therefore interested in the computation of the groups $\E_*(I_n;A,B)$.

\begin{lemma}
The spaces~$I_n$ are accordion spaces.
\end{lemma}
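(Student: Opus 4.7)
My plan is to compute the $T_0$-quotient $I_n$ explicitly and recognise its specialization poset as a zig-zag with $2n+1$ vertices; by~\cite{BK} such posets are exactly the Hasse diagrams of accordion spaces.

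To begin, I would reorder the chosen points as $0 = t_0 < t_1 < \cdots < t_n < t_{n+1} = 1$ with $\{t_1, \ldots, t_n\} = \{d_1, \ldots, d_n\}$, so that the generating subbasis $\{V_1, \ldots, V_{2n}\}$ becomes $\{[0, t_i) : 1 \leq i \leq n\} \cup \{(t_i, 1] : 1 \leq i \leq n\}$. Two points of $I$ are topologically indistinguishable under this topology if and only if the same subbasic sets contain them; a direct check then shows that the $T_0$-equivalence classes are the $2n+1$ sets $u_0 = [0, t_1)$, $u_i = (t_i, t_{i+1})$ for $1 \leq i \leq n-1$, $u_n = (t_n, 1]$, and $c_i = \{t_i\}$ for $1 \leq i \leq n$. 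The only step that requires genuine care is noticing that the endpoint $0$ is merged with the open interval $(0, t_1)$, because every subbasic set contains either both or neither of these; the analogous identification takes place at~$1$.

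From this description, each $\{u_i\}$ is open in $I_n$ (its saturation in $I$ is one of the intervals above), while the smallest open neighbourhood of $t_i$ in the subbasis-generated topology on $I$ is $(t_{i-1}, t_{i+1})$, whose image in $I_n$ is $\{u_{i-1}, c_i, u_i\}$. Consequently $\overline{\{u_i\}} = \{u_i, c_i, c_{i+1}\}$ in $I_n$ (with the obvious modifications at $u_0$ and $u_n$), and the specialization Hasse diagram of $I_n$ is the zig-zag
\[
u_0 > c_1 < u_1 > c_2 < u_2 > \cdots > c_n < u_n,
\]
which is precisely the poset underlying an accordion space. Beyond the boundary identification in the $T_0$-quotient, the remainder is routine bookkeeping, so I do not anticipate any real obstacle.
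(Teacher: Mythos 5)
Your proposal is correct and follows essentially the same route as the paper: identify the $T_0$-classes of $I_n$ as the $2n+1$ cells cut out by the ordered points, read off the minimal open neighbourhoods, and recognise the zig-zag Hasse diagram of an accordion space. Your explicit remark about the endpoints $0$ and $1$ being absorbed into the outer open cells is a detail the paper passes over silently, but the argument is the same.
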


\begin{proof}
For a given natural number $n\in\N$, we order the set $\{d_1,\ldots,d_n\}$ by writing $\{d_1,\ldots,d_n\}=\{e_1,\ldots,e_n\}$ where $e_k<e_{k+1}$ for $1\leq k<n$. Then we have
\[
I_n=\bigl\{[0,e_1),\{e_1\},(e_1,e_2),\{e_2\},(e_2,e_3),\ldots,\{e_n\},(e_n,1]\bigr\}.
\]
Denoting $u_0=[0,e_1)$, $u_k=(e_k,e_{k+1})$ for $1\leq k<n$, $u_n=(e_n,1]$ and $c_k=\{e_k\}$ for $1\leq k\leq n$, a basis for the topology on~$I_n$ given by the family of open subsets
\[
 \bigl\{\{u_k\}\mid 0\leq k\leq n\bigr\}\cup\bigl\{\{u_k,c_k,u_{k+1}\}\mid 0\leq k<n\bigr\}.
\]
Hence~$I_n$ is an accordion space of a specific form, the Hasse diagram of the specialization order of which is indicated in the diagram below.
\[
\bullet\rightarrow\bullet\leftarrow\bullet\rightarrow\bullet\leftarrow\bullet\rightarrow\cdots\leftarrow\bullet\rightarrow\bullet\leftarrow\bullet\rightarrow\bullet\leftarrow\bullet\qedhere
\]
\end{proof}

For $n\in\N$, we briefly write $\Nattrafo_n$ for $\Nattrafo^{I_n}$ and $\FK_n(A)$ for $\FK^{I_n}(A)$.

Assume that~$A$ belongs to the bootstrap class~$\Bootstrap_\E(I)$. By Proposition~\ref{pro:UCT_accordion}, for every $n\in\N$, we have a short exact sequence
\begin{multline}
  \label{eq:UCT}
  \Ext^1_{\Nattrafo_n}\bigl(\FK_n(A),\FK_n(SB)\bigr)
  \into \E_*(I_n;A,B)\\ \prto
  \Hom_{\Nattrafo_n}\bigl(\FK_n(A),\FK_n(B)\bigr).
\end{multline}

\begin{definition}\label{def:FK}
Let~$A$ be a \Cstar{}algebra over $[0,1]$. The \emph{filtrated $\K$-theory of~$A$} consists of the $\Z/2$-graded Abelian groups $\K_*\bigl(A(Y)\bigl)$ for all locally closed subintervals $Y\subseteq I$ together with the graded group homomorphisms in the six-term exact sequence $\K_*\bigl(A(U)\bigl)\to\K_*\bigl(A(Y)\bigl)\to\K_*\bigl(A(Y\setminus U)\bigl)\to\K_{*+1}\bigl(A(U)\bigl)$ for every relatively open subinterval~$U$ of a locally closed interval~$Y\subseteq I$ with the property that the set $Y\setminus U$ is connected.
A \emph{homomorphism} from $\FK(A)$ to $\FK(B)$ is a family of graded group homomorphisms $$\{\varphi_Y\colon\K_*\bigl(A(Y)\bigl)\to\K_*\bigl(B(Y)\bigl)\}_Y$$ such that for all pairs  $U\subset Y$ as above,  all squares in the diagram
\[
\xymatrix{
\K_*\bigl(A(U)\bigl)\ar[r]\ar[d]^{\varphi_U} & \K_*\bigl(A(Y)\bigl)\ar[r]\ar[d]^{\varphi_Y} & \K_*\bigl(A(Y\setminus U)\bigl)\ar[r]\ar[d]^{\varphi_{Y\setminus U}} & \K_{*+1}\bigl(A(U)\bigl)\ar[d]^{\varphi_U} \\
\K_*\bigl(B(U)\bigl)\ar[r] & \K_*\bigl(B(Y)\bigl)\ar[r] & \K_*\bigl(B(Y\setminus U)\bigl)\ar[r] & \K_{*+1}\bigl(B(U)\bigl)
}
\]
commute.

The $\Z/2$-graded Abelian group of homomorphisms from $\FK(A)$ to $\FK(B)$ is denoted by $\Hom_\Nattrafo\bigl(\FK(A),\FK(B)\bigr)$.
\end{definition}
We note that one may consider a variation $\FK'(A)$ of $\FK(A)$ where only intervals with  endpoints from the sequence $(d_n)_{n\in\N}$ and $0,1$
are used. It is not hard to show that the restriction map
$\Hom_\Nattrafo\bigl(\FK(A),\FK(B)\bigr)\to \Hom_\Nattrafo\bigl(\FK'(A),\FK'(B)\bigr)$ is bijective.
It follows that
\[
\Hom_\Nattrafo\bigl(\FK(A),\FK(B)\bigr)=\varprojlim\Hom_{\Nattrafo_n}\bigl(\FK_n(A),\FK_n(B)\bigr).
\]
\begin{remark}
We can regard $\FK(A)$ as a $\Z/2$-graded module over a $\Z/2$-graded pre-additive category $\Nattrafo$ with objects the locally closed subintervals of~$I$ and morphisms generated by elements $i_U^Y$, $r_Y^{Y\setminus U}$, $\delta_{Y\setminus U}^U$ for every relatively open subinterval~$U$ of a locally closed interval~$Y\subseteq I$ such that $Y\setminus U$ is connected. Regardless of the relations among these generators, homomorphisms from $\FK(A)$ to $\FK(B)$ would then simply be graded module homomorphisms. This justifies the notation $\Hom_\Nattrafo\bigl(\FK(A),\FK(B)\bigr)$.
\end{remark}

\section{Classification results}

We are now ready to put together the facts from the previous sections to derive classification results.

Applying inverse limits to the UCT-sequences \eqref{eq:UCT}, and using that  $\varprojlim\nolimits^1$ is a derived functor of $\varprojlim$, we obtain the exact sequence
\begin{multline}
  \label{eq:lim_one_sequence}
0
\to\varprojlim\Ext^1_{\Nattrafo_n}\bigl(\FK_n(A),\FK_n(SB)\bigr)
\to\varprojlim\E_*(I_n;A,B)\\
\to\Hom_\Nattrafo\bigl(\FK(A),\FK(B)\bigr)
\xrightarrow{d}\varprojlim\nolimits^1\Ext^1_{\Nattrafo_n}\bigl(\FK_n(A),\FK_n(SB)\bigr).
\end{multline}

\begin{definition}
Let $\mathcal{K}ir(I)$ denote the class of separable continuous $C(I)$\nb-al\-ge\-bras whose fibers are stable Kirchberg algebras satisfying the \textup{UCT}.
\end{definition}

\begin{theorem}
  \label{thm:general_classification}
Let $\mathscr C$ be a subclass of~$\mathcal{K}ir(I)$ such that for all~$A$ and~$B$ in~$\mathscr C$, the map~$d$ in \eqref{eq:lim_one_sequence} vanishes. Then, for all~$A$ and~$B$ in~$\mathscr C$, the map $\E_*(I;A,B)\to \Hom_\Nattrafo\bigl(\FK(A),\FK(B)\bigr)$ is surjective and every isomorphism $\FK(A)\cong\FK(B)$ lifts to a $C(I)$-linear \Star{}iso\-mor\-phism.
\end{theorem}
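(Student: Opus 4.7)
My plan is to stack the UCT sequences for the finite accordion approximations $I_n$ on top of the Milnor-type $\lim^1$ sequence~\eqref{eq:lim_one_sequence0}, and then conclude via the invertibility criterion of Proposition~\ref{pro:invertibility_criterion} followed by Corollary~\ref{cor:classification}. By Proposition~\ref{pro:bootstrap_class}, both $A$ and $B$ lie in $\Bootstrap_\E(I)$, so the UCT sequences~\eqref{eq:UCT} are available over every accordion space $I_n$; their inverse-limit sequence is precisely~\eqref{eq:lim_one_sequence}, and the assumption $d=0$ upgrades the map $\varprojlim\E_*(I_n;A,B)\to\Hom_\Nattrafo\bigl(\FK(A),\FK(B)\bigr)$ to a surjection. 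Composing with the surjection $\E_*(I;A,B)\twoheadrightarrow\varprojlim\E_*(I_n;A,B)$ supplied by~\eqref{eq:lim_one_sequence0} yields the first claim.

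For the second claim, given an isomorphism $\varphi\colon\FK(A)\cong\FK(B)$, I lift it to some $\alpha\in\E_0(I;A,B)$ using the surjectivity just established. By construction, $\alpha$ induces $\varphi_Y$ on $\K_*\bigl(A(Y)\bigr)$ for every locally closed subinterval $Y\subseteq I$, and in particular for every relatively open subinterval. To invoke Proposition~\ref{pro:invertibility_criterion} I must upgrade this to every open subset $U\subseteq I$: such a $U$ decomposes as a countable disjoint union $U=\bigsqcup_i U_i$ of open subintervals, and correspondingly $A(U)\cong\bigoplus_i A(U_i)$ as $C^*$-algebras (likewise for $B$); since $\K_*$ commutes with countable direct sums, $\alpha$ induces an isomorphism on $\K_*\bigl(A(U)\bigr)$ for every open $U\subseteq I$. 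Proposition~\ref{pro:invertibility_criterion} then identifies $\alpha$ as an $\E(I)$-equivalence, and Corollary~\ref{cor:classification} promotes it to a $C(I)$-linear \Star{}isomorphism $A\cong B$.

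The real obstacle does not lie inside this proof, which is essentially diagram-chasing given the inputs of Sections~2 and~3, but is rather pushed into the hypothesis that the connecting map $d$ vanishes on $\mathscr C$. Verifying this for concrete subclasses, such as the fields with rational $\K$-theory in Theorem~\ref{thm:main-result}, will be the substantive work still to come; I expect it to require the explicit generators and relations for $\Nattrafo_n$ over the accordion spaces $I_n$, the characterisation of injective objects from Proposition~\ref{pro:projective_injective_modules}, and Mittag-Leffler-type control of the tower of $\Ext^1_{\Nattrafo_n}$-groups under restriction along the projections $I_{n+1}\twoheadrightarrow I_n$.
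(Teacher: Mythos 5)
Your proposal is correct and follows essentially the same route as the paper: lift a homomorphism along the exact sequences \eqref{eq:lim_one_sequence} and \eqref{eq:lim_one_sequence0} using the hypothesis $d=0$, then apply Proposition~\ref{pro:invertibility_criterion} and Corollary~\ref{cor:classification}. Your explicit check that invertibility on locally closed subintervals yields invertibility on arbitrary open subsets $U\subseteq I$ (by decomposing $U$ into countably many disjoint open intervals and using that $\K_*$ commutes with countable direct sums) correctly fills in a step that the paper's proof leaves implicit.
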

\begin{proof}
Let $\alpha\in \Hom_\Nattrafo\bigl(\FK(A),\FK(B)\bigr)$.
If $d(\alpha)=0$, we can use the exact sequences \eqref{eq:lim_one_sequence} and \eqref{eq:lim_one_sequence0}
to lift $\alpha$ to an element $\tilde{\alpha}\in \E(I;A,B)$.
If~$\alpha$ was an isomorphism, then $\tilde{\alpha}$ is an $\E(I)$-equivalence by
Proposition~\ref{pro:invertibility_criterion}.
We conclude the proof by applying Corollary~\ref{cor:classification}.
\end{proof}
\begin{remark}
Theorem~\ref{thm:general_classification} does not hold for the whole class $\mathscr C=\mathcal{K}ir(I)$
as shown by Example 6.5 from \cite{DM}. If the fibers of $A$ and $B$ have torsion in $\K$-theory, then the map
$\E_*(I;A,B)\to \ker(d) \subset \Hom_\Nattrafo\bigl(\FK(A),\FK(B)\bigr)$ is typically not surjective.
\end{remark}

We will now verify the hypotheses of Theorem~\ref{thm:general_classification} for certain classes of \Cstar{}algebras over $[0,1]$. Our first example yields (in particular) a proof of Theorem~\ref{thm:main-result}.
\begin{example} (\emph{Proof of Theorem}~\ref{thm:main-result}.)
By Proposition~\ref{pro:projective_injective_modules}, the conclusion of Theorem~\ref{thm:general_classification} holds for the class~$\mathscr C$ of $C(I)$-algebras~$A$ in~$\mathcal{K}ir(I)$ for which $\K_*\bigl(A(Y)\bigr)$ is a divisible Abelian group for every locally closed interval $Y\subseteq I$. By the K\"{u}nneth formula for tensor products, the class~$\mathscr C$ contains all objects in~$\mathcal{K}ir(I)$ which are stable under tensoring with the universal UHF-algebra $M_\mathbb{Q}$. 
Let $A$ be as in Theorem~\ref{thm:main-result}. Since $\K_*(A(x))\cong\K_*(A(x))\otimes \mathbb{Q}$ it follows
that $A(x)\cong A(x)\otimes M_\mathbb{Q}$, for all $x\in I$, by the Kirchberg--Phillips classification theorem.
We conclude the argument by noting that if each fiber of a $C(I)$\nb-algebra $A$ is stable under tensoring with the universal UHF-algebra~$M_\mathbb{Q}$, then so is~$A$ itself by~\cite{HRW}.
\end{example}

\begin{example}
Again by Proposition~\ref{pro:projective_injective_modules}, the conclusion of Theorem~\ref{thm:general_classification} holds for the class~$\mathscr C$ of $C(I)$-algebras~$A$ in~$\mathcal{K}ir(I)$ for which $\K_*\bigl(A(Y)\bigr)$ is a free Abelian group for every locally closed interval $Y\subseteq I$ because the $\Ext^1$-terms in \eqref{eq:lim_one_sequence} vanish.
\end{example}

\begin{example}
Fix $i\in\{0,1\}$. Consider the class~$\mathscr C$ of $C(I)$-algebras~$A$ in~$\mathcal{K}ir(I)$ which satisfy $\K_i\bigl(A(Y)\bigr)=0$ for every locally closed interval $Y\subseteq I$. For parity reasons, the $\Ext^1$-terms in \eqref{eq:lim_one_sequence} vanish. Hence the class~$\mathscr C$ satisfies the condition of Theorem~\ref{thm:general_classification}.
\end{example}

\begin{remark}
Fix $i\in\{0,1\}$. It follows from the main result in~\cite{DE} that the condition of Theorem~\ref{thm:general_classification} is also satisfied for the class~$\mathscr C$ of $C(I)$-algebras~$A$ in~$\mathcal{K}ir(I)$ whose fibers have vanishing $\K_d$-groups and torsion-free $\K_{d+1}$-groups. However, we have not been able to reprove this by an independent, purely $\K$-theoretical argument. 
\end{remark}

\section{A remark on coefficients}
  \label{sec:coefficients}
In order to get a classification result without any $\K$-theoretical assumptions, one expects, as indicated in the introduction, to need some version of filtrated $\K$-theory with coefficients for \Cstar{}algebras over the unit interval. This requires, to begin with, the correct definition of filtrated $\K$-theory with coefficients for \Cstar{}algebras over accordion spaces. It was observed in \cite{ERR} that, already over the two-point Sierpi\'{n}ski space~$S$, the na\"{i}ve candidate for such a definition---using the corresponding six-term sequence of $\Lambda$-modules---produces an invariant which lacks desired properties such as a UMCT.

We argue that, in order to give a fully satisfactory definition of filtrated $\K$-theory with coefficients for \Cstar{}algebras over $S$, one has to allow all finitely generated, indecomposable exact six-term sequences of Abelian groups as coefficients---just as  all finitely generated, indecomposable Abelian groups as coefficients are needed in the UMCT of \cite{DL}. It is easy to see that there is a countable number of isomorphism classes of such six-term sequences. However, unlike in the case of Abelian groups, it follows from the main result in \cite{Ringel_Schmidmeier} that their classification is controlled $\Z/p$-wild for every prime~$p$. This wildness phenomenon seems to make filtrated $\K$-theory with (generalized) coefficients as sketched above very hard to compute explicitly, limiting its r\^{o}le in the theory to a rather theoretical one.

We conclude by remarking that recent results of Eilers, Restorff and Ruiz in~\cite{Eilers-Restorff-Ruiz:Autos} indicate that additional $\K$-theoretical assumptions allow the usage of a smaller, more concrete invariant.

\subsection*{Acknowledgement}
The first named author wishes to express his gratitude towards the Department of Mathematics at Purdue University and, in particular, its operator algebra group for the kind hospitality offered during a visit in spring 2012, where the present work was initiated.

\begin{bibsection}
  \begin{biblist}

\bib{Bauval}{article}{
   author={Bauval, Anne},
   title={$RKK(X)$-nucl\'earit\'e (d'apr\`es G.\ Skandalis)},
   language={French, with English and French summaries},
   journal={$K$-Theory},
   volume={13},
   date={1998},
   number={1},
   pages={23--40},
%   issn={0920-3036},
%   review={\MRref{1610242}{99h:19007}},
%   doi={10.1023/A:1007727426701},
}

\bib{BK}{article}{
  author={Bentmann, Rasmus},
  author={K\"ohler, Manuel},
  title={Universal Coefficient Theorems for $C^*$-algebras over finite topological spaces},
  eprint = {arXiv:math/1101.5702},
  year = {2011},
}

\bib{Blanchard-Kirchberg}{article}{
   author={Blanchard, Etienne},
   author={Kirchberg, Eberhard},
   title={Non-simple purely infinite $C^*$-algebras: the Hausdorff case},
   journal={J. Funct. Anal.},
   volume={207},
   date={2004},
   number={2},
   pages={461--513},
%   issn={0022-1236},
%   review={\MRref{2032998}{2005b:46136}},
%   doi={10.1016/j.jfa.2003.06.008},
}

\bib{Bonkat:Thesis}{thesis}{
  author={Bonkat, Alexander},
  title={Bivariante \(K\)\nobreakdash -Theorie f\"ur Kategorien projektiver Systeme von \(C^*\)\nobreakdash -Al\-ge\-bren},
  date={2002},
  institution={Westf. Wilhelms-Universit\"at M\"unster},
  type={phdthesis},
  language={German},
  note={Available at the Deutsche Nationalbibliothek at \url {http://deposit.ddb.de/cgi-bin/dokserv?idn=967387191}},
}

\bib{Dadarlat:Fibrewise}{article}{
   author={Dadarlat, Marius},
   title={Fiberwise $\KK$-equivalence of continuous fields of $C^*$-algebras},
   journal={J. $\K$-Theory},
   volume={3},
   date={2009},
   number={2},
   pages={205--219},
%   issn={1865-2433},
%   review={\MRref{2496447}{2010j:46122}},
%   doi={10.1017/is008001012jkt041},
}

\bib{Dadarlat:Finite-dimensional}{article}{
   author={Dadarlat, Marius},
   title={Continuous fields of $C^*$-algebras over finite dimensional
   spaces},
   journal={Adv. Math.},
   volume={222},
   date={2009},
   number={5},
   pages={1850--1881},
%   issn={0001-8708},
%   review={\MRref{2555914}{2010j:46102}},
%   doi={10.1016/j.aim.2009.06.019},
}

\bib{Dadarlat-Eilers:Bockstein}{article}{
  author={Dadarlat, Marius},
  author={Eilers, S{\o }ren},
  title={The Bockstein map is necessary},
  journal={Canad. Math. Bull.},
  volume={42},
  date={1999},
  number={3},
  pages={274--284},
%  issn={0008-4395},
%  review={\MRref {1703687}{2000d:46070}},
}

\bib{DE}{article}{
   author={Dadarlat, Marius},
   author={Elliott, George A.},
   title={One-parameter continuous fields of Kirchberg algebras},
   journal={Comm. Math. Phys.},
   volume={274},
   date={2007},
   number={3},
   pages={795--819},
%   issn={0010-3616},
%   %review={\MRref{2328913}{2009f:46075}},
%   doi={10.1007/s00220-007-0298-z},
}

\bib{DL}{article}{
   author={Dadarlat, Marius},
   author={Loring, Terry A.},
   title={A universal multicoefficient theorem for the Kasparov groups},
   journal={Duke Math. J.},
   volume={84},
   date={1996},
   number={2},
   pages={355--377},
%   issn={0012-7094},
%   %review={\MRref{1404333}{(97f:46109)}},
%   doi={10.1215/S0012-7094-96-08412-4},
}	

\bib{DM}{article}{
   author={Dadarlat, Marius},
   author={Meyer, Ralf},
   title={E-theory for ${\rm C}^*$-algebras over topological spaces},
   journal={J. Funct. Anal.},
   volume={263},
   date={2012},
   number={1},
   pages={216--247},
%   issn={0022-1236},
%  % review={\MRref{2920847}{}},
%   doi={10.1016/j.jfa.2012.03.022},
}

\bib{Dadarlat-Pasnicu:Continuous_fields}{article}{
  author={Dadarlat, Marius},
  author={Pasnicu, Cornel},
  title={Continuous fields of Kirchberg $C^*$\nobreakdash -algebras},
  journal={J. Funct. Anal.},
  volume={226},
  date={2005},
  number={2},
  pages={429--451},
  %issn={0022-1236},
  %review={\MRref{2160103}{2006g:46098}},
}

\bib{Eilers-Restorff-Ruiz:Autos}{article}{
   author={Eilers, S{\o}ren},
   author={Restorff, Gunnar},
   author={Ruiz, Efren},
  title={Automorphisms of Cuntz-Krieger algebras},
  eprint = {arXiv:math/1309.1070},
  year = {2013},
}

\bib{ERR}{article}{
   author={Eilers, S{\o}ren},
   author={Restorff, Gunnar},
   author={Ruiz, Efren},
   title={Non-splitting in Kirchberg's ideal-related $KK$-theory},
   journal={Canad. Math. Bull.},
   volume={54},
   date={2011},
   number={1},
   pages={68--81},
   %issn={0008-4395},
   %review={\MRref{2797969}{(2012c:19004)}},
   %doi={10.4153/CMB-2010-083-7},
}
\bib{HRW}{article}{
    AUTHOR = {Hirshberg, Ilan and R{\o}rdam, Mikael and Winter, Wilhelm},
     TITLE = {{$\mathscr C_0(X)$}-algebras, stability and strongly
              self-absorbing {$C^*$}-algebras},
   JOURNAL = {Math. Ann.},
%  FJOURNAL = {Mathematische Annalen},
    VOLUME = {339},
      YEAR = {2007},
    NUMBER = {3},
     PAGES = {695--732},
      ISSN = {0025-5831},
%     CODEN = {MAANA},
%   MRCLASS = {46L05 (47L40)},
%  MRNUMBER = {2336064 (2008j:46040)},
%MRREVIEWER = {Hua Xin Lin},
%       DOI = {10.1007/s00208-007-0129-8},
%       URL = {http://dx.doi.org/10.1007/s00208-007-0129-8},
}		

\bib{Kirchberg}{article}{
  author={Kirchberg, Eberhard},
  title={Das nicht-kommutative Michael-Auswahlprinzip und die Klassifikation nicht-einfacher Algebren},
  language={German, with English summary},
  conference={ title={$C^*$-algebras}, address={M\"unster}, date={1999}, },
  book={ publisher={Springer}, place={Berlin}, },
  date={2000},
  pages={92--141},
  %review={\MRref {1796912}{2001m:46161}},
}

\bib{Kirchberg-Rordam:Non-simple}{article}{
   author={Kirchberg, Eberhard},
   author={R{\o}rdam, Mikael},
   title={Non-simple purely infinite $C^\ast$-algebras},
   journal={Amer. J. Math.},
   volume={122},
   date={2000},
   number={3},
   pages={637--666},
  % issn={0002-9327},
   %review={\MRref{1759891}{2001k:46088}},
}

\bib{Kirchberg-Rordam:Infinite}{article}{
   author={Kirchberg, Eberhard},
   author={R{\o}rdam, Mikael},
   title={Infinite non-simple $C^*$-algebras: absorbing the Cuntz
   algebras $\Cuntz_\infty$},
   journal={Adv. Math.},
   volume={167},
   date={2002},
   number={2},
   pages={195--264},
  % issn={0001-8708},
   %review={\MRref{1906257}{2003k:46080}},
   %doi={10.1006/aima.2001.2041},
}

\bib{MN:Filtrated}{article}{
  author={Meyer, Ralf},
  author={Nest, Ryszard},
  title={\(C^*\)\nobreakdash-Algebras over topological spaces: filtrated \(\textup K\)\nobreakdash-theory},
  journal={Canad. J. Math.},
  volume={64},
  pages={368--408},
  date={2012},
%  review={\MRref{2953205}{}},
  doi={10.4153/CJM-2011-061-x},
}

\bib{MN:Bootstrap}{article}{
  author={Meyer, Ralf},
  author={Nest, Ryszard},
  title={$C^*$-algebras over topological spaces: the bootstrap class},
  journal={M\"unster J. Math.},
  volume={2},
  date={2009},
  pages={215--252},
%  issn={1867-5778},
%  review={\MRref {2545613}{}},
 }

\bib{Restorff:Thesis}{thesis}{
  author={Restorff, Gunnar},
  title={Classification of Non-Simple $\textup C^*$\nobreakdash -Algebras},
  type={phdthesis},
  institution={K{\o }benhavns Universitet},
  date={2008},
}

\bib{Ringel_Schmidmeier}{article}{
   author={Ringel, Claus Michael},
   author={Schmidmeier, Markus},
   title={Submodule categories of wild representation type},
   journal={J. Pure Appl. Algebra},
   volume={205},
   date={2006},
   number={2},
   pages={412--422},
%   issn={0022-4049},
%   review={\MRref{2203624}{(2006i:16025)}},
%   doi={10.1016/j.jpaa.2005.07.002},
}

\bib{Rordam:Stable}{article}{
   author={R{\o}rdam, Mikael},
   title={Stable $C^*$-algebras},
   conference={
      title={Operator algebras and applications},
   },
   book={
      series={Adv. Stud. Pure Math.},
      volume={38},
      publisher={Math. Soc. Japan},
      place={Tokyo},
   },
   date={2004},
   pages={177--199},
   %review={\MRref{2059808}{2005a:46111}},
}

 \bib{RS}{article}{
   author={Rosenberg, Jonathan},
   author={Schochet, Claude},
   title={The K\"unneth theorem and the universal coefficient theorem for Kasparov's generalized $\K $-functor},
   journal={Duke Math. J.},
   volume={55},
   date={1987},
   number={2},
   pages={431--474},
}
  \end{biblist}
\end{bibsection}

\end{document}